\numberwithin{equation}{section}
\theoremstyle{plain}
\newtheorem{theorem}{Theorem}[section]
\newtheorem{lemma}[theorem]{Lemma}
\newtheorem{definition}[theorem]{Definition}
\theoremstyle{remark}
\newtheorem{remark}[theorem]{Remark}
\begin{document}
	
	\date{\today}
	\title[Fields of cancellation]{Vector fields of  Cancellation  for the Prandtl Operators}
	
	\author[T. Yang]{Tong Yang}
	\address[T. Yang]{Department of Mathematics, City University of Hong Kong, Hong Kong, China}
	\email{matyang@cityu.edu.hk}

\begin{abstract}
	It has been a fascinating topic in the study of boundary layer theory about the well-posedness of Prandtl equation that was derived in  1904. Recently, new ideas  about
	cancellation to overcome the loss of tangential derivatives were  obtained so that Prandtl equation can
	be shown to be well-posed in Sobolev spaces to avoid the use of 
	Crocco transformation as in the classical work of Oleinik. This short note aims to show that the cancellation mechanism is in fact related to some intrinsic
	directional derivative that can be used to recover the tangential derivative under some structural assumption on the fluid
	near the boundary. 
\end{abstract}

\subjclass[2010]{35Q30, 35Q31}
\keywords{Prandtl operators, cancellation mechanism, vector field of cancellation,  well-posedness theory, structual  assumptions}

\maketitle

\section{Introduction}

In 1904, Prandtl  derived the famous equation to describe the fluid behaviour near a boundary by
resolving the difference between the viscous and the inviscid effects
with no-slip boundary condition. This revolutionary result has vast applications in aerodynamics and other areas of engineering. It  also provides a typical mathematical model that attracts attention even now because a lot of mathematical problems remain unsolved. The key observation by Prandtl is that
outside a layer of thickness of $\sqrt{\frac{1}{\mbox{Re}}}$, convection dominates
so that the flow is governed by the
Euler equations; while inside a layer (boundary layer) of thickness of $\sqrt{\frac{1}{\mbox{Re}}}$, convection
and viscosity balance so that the flow is governed by the Prandtl  equations. Here $\mbox{Re}$ is the
Reynolds number.

Let us briefly recall the derivation of the Prandtl equation. Consider the 
incompressible Navier-Stokes equations over a flat boundary  $\{(x,y)\in D, z=0\}$ with
no-slip boundary condition,
$$
\begin{cases}
&\hspace{-.15in} \partial_t {\mathbf u}^\epsilon+({\mathbf u}^\epsilon\cdot
\nabla){\mathbf u}^\epsilon+\nabla p^\epsilon-\epsilon\mu\Delta {\mathbf u}^\epsilon=0, \\
&\hspace{-.15in} \nabla\cdot {\mathbf u}^\epsilon=0,\\
&\hspace{-.15in} 
{\mathbf u}^\epsilon|_{z=0}=0,
\end{cases}
$$
where ${\mathbf u}^\epsilon$ is the velocity field, $p^\epsilon$ represents the pressure
and $\epsilon\mu$ is the viscosity coefficient with $\epsilon$ being a small parameter.
According to the Prandtl ansatz, set
${\mathbf u}^\epsilon=(u^\epsilon, v^\epsilon, w^\epsilon)^T$ with
the following scaling:
$$
\begin{cases}
u^\epsilon(t,x,y,z)=u(t,x,y,\frac{z}{\sqrt{\epsilon}})+o(1),\\
v^\epsilon(t,x,y,z)=v(t,x,y,\frac{z}{\sqrt{\epsilon}})+o(1),\\
w^\epsilon(t,x,y,z)=\sqrt{\epsilon}w(t,x,y,\frac{z}{\sqrt{\epsilon}})+o(\sqrt{\epsilon}).
\end{cases}$$
The leading order gives the following classical Prandtl equations
$$
\begin{cases}
\partial_t u+(u\partial_x+v\partial_y+w\partial_z)u+\partial_xp^E(t,x,y,0)=\mu\partial_z^2u,\\
\partial_t v+(u\partial_x+v\partial_y+w\partial_z)v+\partial_yp^E(t,x,y,0)=\mu\partial_z^2v,\\
\partial_xu+\partial_yv+\partial_z w=0,\\
(u, v, w)|_{z=0}=0, \qquad \lim\limits_{z\to+\infty}(u, v)=(u^E, v^E)(t,x,y,0),
\end{cases}$$
where the fast variable $\frac{z}{\sqrt{\epsilon}}$ is still denoted by $z$ for simplicity of notation. And the pressure and velocity  of the
outer flow denoted by  $p^E(t,x,y)$ and ${\mathbf u}^E=(u^E,v^E,0)(t,x,y)$  satisfy the
Bernoulli's law
$$
\partial_t {\mathbf u}^E+({\mathbf u}^E\cdot \nabla){\mathbf u}^E +\nabla p^E=0.
$$

For later presentation, we denote  the Prandtl operator by
$$
P^\mu=\partial_t+u\partial_x+v\partial_y +w\partial_z-\mu\partial_z^2,
$$
with a parameter $\mu$ in front of the dissipation in the normal direction. 

Note that from the no-slip boundary condition and the incompressibility, we have
$$
w=-\int_0^z (u_x+v_y)dz,
$$
so that the Prandtl equations can be written as
$$
\begin{cases} 
\big(\partial_t+u\partial_x+v\partial_y -\int_0^z (u_x+v_y)dz\partial_z-\mu\partial_z^2\big)u=-\partial_xp^E(t,x,y,0),\\
\big(\partial_t+u\partial_x+v\partial_y -\int_0^z (u_x+v _y)dz\partial_z-\mu\partial_z^2\big) v=-\partial_yp^E(t,x,y,0).
\end{cases}
$$
From the above two time evolution equations on $(u,v)$, the loss of tangential 
derivative is obvious because of the
	convection term on the left hand side has a non-local term that contains tangential
	derivatives of $(u,v)$. In fact, whether there is a general well-posedness theory 
	in three space dimensions
	with finite order differential regularity  remains unsolved in contrast to  the classical work by Oleinik
	in 1963 in two space dimensions under the monotonicity condition, cf. \cite{oleinik-3} and the
	references therein. Precisely, under the monotonicity condition, the Crocco transformation is used in the classical work
	by Oleinik in which the normal coordinate $z$ is replaced by $u$ so that the reduced equation
	for $u^2$ becomes degenerate parabolic. And then the maximum principle argument can be
	applied with subtle analysis. 
	
	On the other hand, with infinite order of differential regularity, the well-posedness of the Prandtl equations
	was proved in the seminal work by Sammartino-Caflisch \cite{Samm} in analytic framework,  and in recent work  \cite{DG, LMY} in Gevrey function space with optimal
	index 2   in two and three space dimensions.
	
The well-posedness in analytic framework can be illustated as follows.
Consider a time evolution equation
$$
P^\mu u=f.
$$
Let $\partial^m u$ be the m-th order tangential derivatives of $u$ so that it satisfies
$$
P^\mu\partial^m u =F(\partial^{m+1}u, \cdots),
$$
with the source term depending on $\partial^{m+1}u$ up to one order power.
In order to obtain an estimate on the analytic norm on $u$, we only need to have a local in time
bound on $\frac{\rho^m\|\partial^m u\|}{ m!}$, where $\rho=\rho(t)$ is the radius of analyticity.
Based on a basic inequality
$$
m (\frac{\tilde{\rho}}{\rho})^m\lesssim \frac{1}{\rho-\tilde{\rho}},\quad \tilde{\rho}<\rho,$$
one can estimate the source term with one extra order of derivative by
\begin{eqnarray*}
\int_0^{t} \frac{\rho^m(t)\|\partial^{m+1}u\|(s)}{m!}ds
\lesssim \|u\|\int_0^{t} (\frac{\rho(t)}{\rho(s)})^m(m+1) ds\lesssim \int_0^{t} \frac{\|u\|}{\rho(s)-\rho(t)}ds.
\end{eqnarray*}
Then by choosing a suitable radius function of analyticity in time to make the final integral
bounded in finite time, the a priori bound can be closed by    an argument
using the  abstract Cauchy-Kowalewski theory. Please refer to \cite{Asano,Samm} for details.

With the above understanding on the Prandtl operator, we will investigate the  cancellation 
mechanisms of this operator through  directional derivatives in the next section.

\section{Cancellation mechanisms}
In this section, we will first recall the main observations in the papers \cite{awxy,MW} about the cancellations by using either  the convection term or the vorticity 
equation in two space dimensions.  And then we will present a new observation 
about directional derivatives
through some suitably chosen  vector fields of cancellation. 
The vector fields of cancellation are shown to be consistent with the recent
work on both the classical Prandtl operator and  the Prandtl operator derived from the MHD system
in the fully nonlinear regime.

Recall the Prandtl operator in two space dimensions $(x,z)$ given by
$$
\partial_t u+  u\partial_x u+w\partial_z u + \partial_x P^E=\mu\partial^2_z u.
$$
Consider its linearization around a divergence free vector field $(\tilde{u}, \tilde{w})$
$$
\partial_t u+  \tilde{u}\partial_x u+\tilde{w}\partial_z u +u\partial_x\tilde{u}
	+w\partial_z\tilde{u}=\mu\partial^2_z u +S,
$$
where $S$ represents the source term.
To treat the loss of derivative term of the unknown function $u$, that is $w\partial_z\tilde{u}$,
we can divide both sides by $\partial_z\tilde{u}$ 
under Oleinik's monotonicity condition on the velocity field $(\tilde{u},\tilde{w})$, i.e.,  $\tilde{\omega}=\partial_z \tilde{u}\neq 0$.  When we consider the time evolution of $u_x$  by differentiating the above equation in $x$,  the differentiation in $z$ again yields a cancellation by the divergence free condition on the velocity field $(u,w)$ for the two terms involving tangetial derivative of the second
order:
$$
(\frac{\tilde{u}\partial_{xx} u+w_x\partial_z\tilde{u}}{\partial_z\tilde{u}})_z=u_{xx}+w_{xz}+\tilde{R}=\tilde{R},
$$
where $\tilde{R}$ contains terms with  tangential derivative of $u$ at most one order. This implies that
one can use the good unknown function
$$
g_1=(\frac{u_x}{\tilde{\omega}})_z=\frac{w_x\tilde{w}-\tilde{w}_z u_x}{\tilde{w}^2},
$$	
with $w=\partial_z u$ to avoid the loss of tangential derivative in the time evoluation equation. And this idea is used in \cite{awxy} through the  Nash-Moser iteration to yield the local
in time well-posedness.

Another cancellation function observed in \cite{MW} is by noticing the vorticity equation for $\omega$ has the same form as $u$, that is
$$
P^\mu(\omega)=0,\quad P^\mu(u)=-\partial_x P^E.
$$
Hence 
$$
P^\mu(\omega_x)=-u_x\omega_x-w_x\omega_z,\quad P^\mu(u)=-u_x^2-w_x\omega-\partial^2_x P^E,
$$
where the non-local term $w_x$ containing extra one order of tangential derivative can be cancelled by
using the good unknown function
$$
f_1=\omega_x-\frac{\omega_z}{\omega}u_x,
$$	
 cf. \cite{MW} for details. Since $f_1\sim \omega g_1$, 
under the Oleinik monotonicity condition $\omega\neq 0$, the two good unknown functions
$g_1$ and $f_1$ are basically
similar up to a weight function.

We now introduce the concept of the field of cancellation so that the above cancellation becomes 
clear and more physical.

\begin{definition}
A vector field $\Theta$ is called a field of cancellation for  loss of tangential
 derivative with respect to the
Prandtl operator $P^\mu$   if the commutator of $P^\mu$ and $\Theta\cdot \nabla$  does not have 
the loss
of tangential derivative property. That is, for any differential function $f$, 
$$
[P^\mu, \Theta\cdot \nabla]f=R,
$$
where $R$ contains tangential derivative of $u$ and $f$ up to the first order.
\end{definition}
\begin{remark}
	Note that $(1,0)\cdot \nabla u$ can not be estimated directly in the Prandtl equation. However, if there
	exists a field of cancellation $\Theta$, then 
	$\Theta\cdot\nabla u$ can be estimated. Hence,  the remained question is whether one can recover $(1,0)\cdot \nabla u$ from $\Theta\cdot\nabla u$. For this, some structural assumption is needed.
\end{remark}

For the existence of field of cancellation for Prandtl operator, we have the following lemma.

\begin{lemma}\label{main}
	Assume $\mathbf{u}=(u,w)$ is a divergence free vector field in 2D and $P^\mu$ is  the 
	Prandtl operator.  If there exists  a vector function $\Theta$ that contains no tangential derivatives
	of $u$ satisfying
	$$
	P^\mu \Theta =(\Theta\cdot \nabla) \mathbf{u},
	$$
	then $\Theta$ is a field of cancellation.
	\end{lemma}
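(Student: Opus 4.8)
The plan is to compute the commutator $[P^\mu, \Theta\cdot\nabla]$ directly and show that the only potentially dangerous terms — those carrying one extra tangential derivative of $u$ or of the test function $f$ — cancel precisely because of the hypothesis $P^\mu\Theta=(\Theta\cdot\nabla)\mathbf{u}$. Write $\Theta=(\Theta_1,\Theta_2)$ and $\mathbf{u}=(u,w)$ with $w=\partial_z u$ (or more generally $w=-\int_0^z u_x\,dz$), and recall $P^\mu=\partial_t+u\partial_x+w\partial_z-\mu\partial_z^2$. The operator $\Theta\cdot\nabla=\Theta_1\partial_x+\Theta_2\partial_z$. The commutator with the first-order part $\partial_t+u\partial_x+w\partial_z$ produces, for any function $f$,
\begin{equation*}
[\,\partial_t+u\partial_x+w\partial_z,\ \Theta_1\partial_x+\Theta_2\partial_z\,]f = -\big((P^0\Theta_1)\partial_x f+(P^0\Theta_2)\partial_z f\big) + \big((\Theta\cdot\nabla u)\partial_x f+(\Theta\cdot\nabla w)\partial_z f\big),
\end{equation*}
where $P^0=\partial_t+u\partial_x+w\partial_z$ is the convective derivative; the first group comes from $\Theta\cdot\nabla$ hitting the coefficients $u,w$ of $P^0$ applied to $f$, wait — let me re-order: the transport part contributes $-(\Theta\cdot\nabla u)\partial_x f-(\Theta\cdot\nabla w)\partial_z f$ from differentiating the coefficients of $P^0$, and $+(P^0\Theta_1)\partial_x f+(P^0\Theta_2)\partial_z f$ from $P^0$ hitting $\Theta$'s coefficients. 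Combining with the viscous part, the full commutator is
\begin{equation*}
[P^\mu,\Theta\cdot\nabla]f = \big((P^\mu\Theta_1)\partial_x f+(P^\mu\Theta_2)\partial_z f\big) - \big((\Theta\cdot\nabla u)\partial_x f+(\Theta\cdot\nabla w)\partial_z f\big) + (\text{lower-order }\mu\text{-terms}),
\end{equation*}
where the lower-order viscous terms are $-\mu\big(2\partial_z\Theta_1\,\partial_{xz}f+\partial_z^2\Theta_1\,\partial_x f+2\partial_z\Theta_2\,\partial_{zz}f+\partial_z^2\Theta_2\,\partial_z f\big)$ or similar — these involve no tangential derivative of $u$ at all (since $\Theta$ is assumed free of tangential derivatives of $u$) and only $z$-derivatives of $f$, hence they are harmless.

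The key step is then to invoke the hypothesis: $P^\mu\Theta_i=(\Theta\cdot\nabla u)\delta_{i1}+(\Theta\cdot\nabla w)\delta_{i2}$, i.e. $P^\mu\Theta_1=\Theta\cdot\nabla u$ and $P^\mu\Theta_2=\Theta\cdot\nabla w$. Substituting into the displayed commutator, the two problematic groups cancel exactly:
\begin{equation*}
[P^\mu,\Theta\cdot\nabla]f = (P^\mu\Theta_1-\Theta\cdot\nabla u)\partial_x f + (P^\mu\Theta_2-\Theta\cdot\nabla w)\partial_z f + (\text{harmless }\mu\text{-terms}) = R,
\end{equation*}
and one checks that $R$ — consisting only of the viscous remainder — contains tangential derivatives of $u$ and of $f$ to at most first order. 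Actually one must be slightly careful about what "loss of tangential derivative" means here: the term $\partial_x f$ appearing with coefficient $P^\mu\Theta_1$ would, when $f$ itself is a high tangential derivative of $u$, cost one derivative; that is exactly the term killed by the hypothesis. After cancellation the surviving $R$ has at most $\partial_x f$ appearing through $\partial_z$-type viscous corrections with coefficients built from $\partial_z\Theta$, which carry no tangential $u$-derivative, so no net loss occurs.

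I expect the main obstacle to be bookkeeping rather than conceptual: one must track carefully which terms in the expansion of $[\,u\partial_x+w\partial_z,\,\Theta\cdot\nabla\,]f$ and $[\,-\mu\partial_z^2,\,\Theta\cdot\nabla\,]f$ land on $f$ versus on $\Theta$, and verify that every term involving $\partial_x$ of a high-order quantity is matched. A subtlety worth flagging is the role of the nonlocal relation $w=-\int_0^z u_x\,dz$: differentiating $w$ tangentially, $\partial_x w=-\int_0^z u_{xx}\,dz$, formally costs a derivative, so the statement "$\Theta$ contains no tangential derivatives of $u$" should be read as also excluding such nonlocal tangential quantities, and the term $\Theta\cdot\nabla w$ in the hypothesis must be interpreted consistently (it is $\Theta_1 w_x+\Theta_2 w_z$, with $w_z=-u_x$ local and $w_x$ nonlocal). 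The cleanest route is to keep $w$ as an independent component of the divergence-free field $\mathbf{u}$, use $\partial_x u+\partial_z w=0$ only where it genuinely helps, and present the commutator identity first for a general smooth $\mathbf{u}$, then specialize. Once the identity
\begin{equation*}
[P^\mu,\Theta\cdot\nabla]f=(P^\mu\Theta-(\Theta\cdot\nabla)\mathbf{u})\cdot\nabla f-\mu\big(2(\partial_z\Theta)\cdot\nabla\partial_z f+(\partial_z^2\Theta)\cdot\nabla f\big)
\end{equation*}
is established, the lemma follows immediately by substituting the hypothesis to annihilate the first term.
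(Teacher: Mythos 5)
Your proposal is correct and follows essentially the same route as the paper: both expand $P^\mu(\Theta\cdot\nabla f)$ by the Leibniz rule, observe that the hypothesis $P^\mu\Theta=(\Theta\cdot\nabla)\mathbf{u}$ makes $(P^\mu\Theta)\cdot\nabla f$ cancel against the commutator contribution $-\big((\Theta\cdot\nabla u)\partial_x f+(\Theta\cdot\nabla w)\partial_z f\big)$, whose only dangerous piece is $\theta_1 w_x\,\partial_z f$, and absorb the viscous cross-terms into $R$. The one blemish is that your final displayed identity double-counts $-\mu(\partial_z^2\Theta)\cdot\nabla f$ (it is already contained in $P^\mu\Theta$); since that term carries no tangential derivative of $u$ and at most one of $f$, the conclusion is unaffected.
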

\begin{proof}
	Note that

	\begin{eqnarray*}
	&& P^\mu(\Theta \cdot \nabla f)= (P^\mu \Theta) \cdot \nabla f+ \Theta \cdot [P^\mu,\nabla] f + 
	(\Theta \cdot \nabla) P^\mu f\\[3mm]
	&&= (\Theta\cdot\nabla )\mathbf{u}\cdot \nabla f -\Theta \cdot \begin{pmatrix}
		u_x\partial_x +w_x\partial_z \\[2mm]
		u_z\partial_x +w_z\partial_z
	\end{pmatrix}f + (\Theta \cdot \nabla) P^\mu f\\[3mm]
&&= \theta_1 w_x f_z-\theta_1 w_x f_z + (\Theta \cdot \nabla) P^\mu f+R\\
&&=(\Theta \cdot \nabla) P^\mu f+R,
	\end{eqnarray*}
where $\Theta=(\theta_1,\theta_2)$, that is
$$
[P^\mu, \Theta\cdot \nabla]f=R,
$$
where $R$ contains  tangential derivatives of $u$ and $f$ up to the first order. Here, note that $w_x$ that is related to the second order derivative of $u$ in $x$ is cancelled.
	\end{proof}

The above lemma provides a strategy of finding a  vector field for recovering the loss of tangential 
derivatives.
That is,  if we estimate the directional derivative $\Theta\cdot\nabla f$ using
the Prandtl operator, there is no loss of tangential derivative. On the other hand, it is crucial that one can
recover the tangential derivative $(1,0)\cdot\nabla f$. For this, one needs some structural assumption
such as the Oleinik's monotonicity condition. In addition, for higher order tangential derivative, we can
use the terms in the $\partial_x^m (\Theta\cdot\nabla f)$ that involve highest order tangential
derivatives.

In the following two subsections, we will present the existence of the
vector field $\Theta$ for two physical models, that is, the classical Prandtl operator and the Prandtl operator derived from the MHD system in two space dimensions. Note that it is a very interesting
and unsolved problem about whether such vector field exists in three space dimension.

\subsection{2D Prandtl equation} If we consider the classical Prandtl equation, a  field of
cancellation can be
constructed as follows. First of all, 
by $P^\mu(u_z)=0$, we have
\begin{eqnarray*}
P^\mu(u_{zz})&=& -u_zu_{zx}-w_zu_{zz}\\
&=&u_xu_{zz}+w_{zz}u_z=\mathbf{u}_{zz}\cdot \nabla u.
\end{eqnarray*}
On the other hand
$$
P^\mu(w_z)=-P^\mu(u_x)=P^E_{xx}+u^2_x+w_xu_z,
$$
gives
\begin{eqnarray*}
	P^\mu(w_{zz})&=& (u^2_x+w_xu_z)_z-u_zw_{xz}-w_zw_{zz}\\
	&=&w_xu_{zz}+w_{zz}w_z=\mathbf{u}_{zz}\cdot \nabla w.
\end{eqnarray*}
Hence,
$$P^\mu(\mathbf{u}_{zz})=(\mathbf{u}_{zz}\cdot \nabla )\mathbf{u}.$$
According to the Lemma 2.3, we can set
$$\Theta=\mathbf{u}_{zz},$$
so that 
$$
P^\mu(\mathbf{u}_{zz}\cdot\nabla \mathbf{u})=R,
$$
where the source term $R$ contains   tangential derivative of $u$ at most one order. Therefore, standard analytic techniques can be applied to the above equation for desired estimates on
$\mathbf{u}_{zz}\cdot\nabla u$. Note that $(1,0)\cdot\nabla u=u_x$ can be recovered from $\mathbf{u}_{zz}\cdot\nabla u$  if $u_z\neq 0$ because 
$$
\mathbf{u}_{zz} \cdot \nabla u=u_{zz}u_x-u_{xz}u_z=-\omega f_1\sim -\omega^2 g_1,
$$
where $g_1$ and $f_1$ are the two good unknown functions used in \cite{awxy,MW} mentioned above.
This shows that under the 
Oleinik's monotonicity condition $\omega\neq 0$,  the directional derivative $\mathbf{u}_{zz}\cdot\nabla u$ can be used to recover  the tangential derivative $(1,0)\cdot\nabla u=u_x$ as in  \cite{awxy,MW}.

\subsection{2D MHD}

In this subsection, we will present another model to illustrate the
existence of  vector field of cancellation. For this, consider the MHD model in two space dimensions:
\begin{align}
\label{1.1}
\left\{
\begin{array}{ll}
\partial_t \mathbf{u}+\mathbf{u}\cdot\nabla \mathbf{u}+\nabla p-\frac{1}{\hbox{Re}}\triangle \mathbf{u}=S \mathbf{h}\cdot\nabla \mathbf{h},\\
\partial_t \mathbf{h}-\hbox{curl}(\mathbf{u}\times \mathbf{h})+\frac{1}{\hbox{Rm}}\hbox{curl curl} \mathbf{h}=\mathbf{0},\\
\hbox{div} \mathbf{u}=0,\quad \hbox{div} \mathbf{h}=0,\quad (x,y)\in\Omega=\mathbb{T}\times\mathbb{R}_+,
\end{array}
\right.\nonumber
\end{align}
where $\mathbf{u}=(u,w)$ and $\mathbf{h}=(f,h)$ represent the velocity and magnetic fields respectively, and $p$ is the total pressure. Here, there are some physical parameters, 
$Re$ representing the Reynolds number, $Rm$ the magnetic Reynolds number, and 
$S=\frac{\mbox{Ha}^2}{\mbox{Re}\mbox{Rm}}$ the  coupling parameter, and
$\mbox{Ha}$ the Hartmann number.

It is known that in the nonlinear regime when $\mbox{Re}\sim \mbox{Rm}\sim \mbox{Ha}$ being
sufficiently large, one can derive
a Prandtl type boundary layer system of equations with no-slip boundary condition on the velocity field and
perfect conducting condition on the magnetic field. Precisely, by taking
$$\mbox{Rm}=\frac{1}{\kappa\epsilon}, \,\, \mbox{Re}=\frac{1}{\mu\epsilon},\,\,S=1,
$$
 with 
 $\epsilon$ being a small parameter, the MHD system becomes
\begin{equation*}
\label{MHD}
\begin{cases}
\partial_t\mathbf{u}^{\epsilon}+(\mathbf{u}^{\epsilon}\cdot\nabla)\mathbf{u}^{\epsilon}-(\mathbf{h}^{\epsilon}\cdot\nabla)\mathbf{h}^{\epsilon}+\nabla p^\epsilon=\mu\epsilon\triangle\mathbf{u}^{\epsilon},\quad (x,y)\in\Omega,\\
\partial_t\mathbf{h}^{\epsilon}+(\mathbf{u}^{\epsilon}\cdot\nabla)\mathbf{h}^{\epsilon}-(\mathbf{h}^{\epsilon}\cdot\nabla)\mathbf{u}^{\epsilon}
=\kappa\epsilon\triangle \mathbf{h}^{\epsilon},\\
\nabla\cdot\mathbf{u}^{\epsilon}=0,\quad\nabla\cdot \mathbf{h}^{\epsilon}=0,\\
\mathbf{u}^{\epsilon}|_{y=0}=\mathbf 0,\qquad \partial_y f^\epsilon|_{y=0}=0,\quad h^\epsilon|_{y=0}=0,\\
(\mathbf{u}^{\epsilon},\mathbf{h}^{\epsilon})|_{t=0}=(\textbf{u}_0,\textbf{h}_0)(x,y).
\end{cases}
\end{equation*}
If one applies the Prandtl ansatz to the above system
\begin{align*}
\left\{
\begin{array}{ll}
u^\epsilon(t,x,z)=u(t,x, \frac{z}{\sqrt{\epsilon}}),\\
w^\epsilon(t,x,z)=\epsilon^{\frac{1}{2}}w(t,x,\frac{z}{\sqrt{\epsilon}} ),
\end{array}
\right.
\qquad
\left\{
\begin{array}{ll}
f^\epsilon(t,x,z)=f(t,x, \frac{z}{\sqrt{\epsilon}}),\\
h^\epsilon(t,x,z)=\epsilon^{\frac{1}{2}}h(t,x, \frac{z}{\sqrt{\epsilon}}),
\end{array}
\right.
\end{align*}
and
\[p^\epsilon(t,x,z)=p(t,x,\frac{z}{\sqrt{\epsilon}}),\]
the following Prandtl equations for MHD can
be derived:
	\begin{align*}
	\left\{
	\begin{array}{ll}
	\partial_tu+u\partial_xu+w\partial_z u-\mu\partial^2_z u=f\partial_xf+h\partial_zf-P_x,\\
	\partial_t f+u\partial_x f+w\partial_z f- \kappa\partial_z^2 f=f\partial_x u+h\partial_z u,\\
	\partial_xu+\partial_z w=0,\quad \partial_x f+\partial_z h=0,\\
	u|_{t=0}=u_{0}(x,y),\quad f|_{t=0}=f_{0}(x,y),\\
	(u,w,\partial_z f,h)|_{z=0}=0,\\
	\lim\limits_{z\rightarrow+\infty}(u,f)=(u^E, f^E)(t,x,0),
	\end{array}
	\right.
	\end{align*}
	where again the fast variable $\frac{z}{\sqrt{\epsilon}}$ is still denoted by $z$, the outer flow $(u^E, f^E, P)(t,x,0)$ is the trace of a solution to  the ideal MHD system on the boundary. For this system, note that the stream function $\psi$ of the magnetic field
	$(f,h)$ satisfies
	$$
	P^\kappa \psi =0,
	$$
	that is in analogue to the vorticity $\omega$ for the 2D Prandtl equation.
	The following two good unknown functions are used in \cite{LXY1} to take care of the
	$m$-th tangential derivatives of $u$ and $f$:
	\begin{equation}\label{umfm}
	u^m:=\partial_x^m u-\frac{\partial_zu}{f}\partial_x^m\psi,~ f^m:=\partial_x^m f-\frac{\partial_z f}{f}\partial_x^m\psi.
	\end{equation}
	With these unknown functions, the equations for $u^m$ and $f^m$ are in  the following
	 symmetric form
	so that the loss of derivatives can be treated
	\begin{align*}
	\left\{
	\begin{array}{ll}
	&\partial_t u^m +(u\partial_x +w\partial_z) u^m
	-(f\partial_x +h\partial_z) f^m= \mu\partial^2_z u^m +R_1\\[5mm]
	& \partial_t f^m +(u\partial_x +w\partial_z) f^m
	-(f\partial_x +h\partial_z) u^m=\kappa \partial^2_z f^m +R_2,
	\end{array}
	\right.
	\end{align*}
	where $R_i$, $i=1,2$, contain tangential derivatives of at most $m$-th order. Here,
	the non-degeneracy of the tangential magnetic field $f\neq 0$ is needed for both
	the definition of the unknown functions and also for recovering the tangential derivatives
	of $u$ and $f$ from them. Note that the coordinate transformation $(x,z,t)\rightarrow (x,\psi, t)$
	can play a role as the  Crocco transformation so that the reduced system is quasilinear and symmetric 
	and the standard analysis can then be applied.
	
	Let us  follow the Lemma \ref{main} to find out whether there is appropriate
	vector  field for cancellation to avoid the lost of tangential derivative difficulty. In fact, for the Prandtl system of MHD, this vector field is already built in as it is the direction of the magnetic field. In fact,
	note that
	$$
	P^\kappa (\mathbf{h}) =(\mathbf{h}\cdot \nabla )\mathbf{u}.
	$$
	The $\Theta$ in  Lemma \ref{main} is simply $\mathbf{h}$ that is intrinsic  in the system. Since
the Prandtl system for MHD is
	$$
	\begin{pmatrix}
	P^\mu u\\
	P^\kappa f
	\end{pmatrix}=\begin{pmatrix} 0&\mathbf{h}\cdot\nabla\\
	\mathbf{h}\cdot \nabla& 0\end{pmatrix} \begin{pmatrix} u\\f\end{pmatrix}.$$
	By applying the Lemma, we have 
	$$
	\begin{pmatrix}
	P^\mu (\mathbf{h}\cdot\nabla u)\\
	P^\kappa (\mathbf{h}\cdot \nabla f)
	\end{pmatrix}=\begin{pmatrix} 0&\mathbf{h}\cdot\nabla\\
	\mathbf{h}\cdot \nabla& 0\end{pmatrix} \begin{pmatrix} \mathbf{h}\cdot\nabla u\\
	\mathbf{h}\cdot\nabla f\end{pmatrix}+R,$$
	where $R$ contains tangential derivatives of $u$ and $f$ up to the first oder. Hence, 
	  $\mathbf{h}\cdot \nabla$ gives the direction  of the cancellation that is the direction of the
	  magnetic field. And the structural assumption of the non-degenerate tangential magnetic
	  field component, $f\neq 0$ is used to recover the tangential derivative $\begin{pmatrix} 1&0\end{pmatrix}\cdot (\nabla u, \nabla f)$. Note that
	$$
	(\mathbf{h}\cdot\nabla u, \mathbf{h}\cdot\nabla f)=f(u^1,f^1),
	$$
	where $(u^1,f^1)$ is the good function of the first order defined in \eqref{umfm} so that it is consistent with the observation in \cite{LXY1}.
	
	\vspace{1cm}

\noindent{\bf Acknowledgment:}  The research was supported by the General Research Fund of Hong Kong CityU No. 11303521.

\end{document}